\documentclass[11pt]{article}

\usepackage{amsmath, amssymb, amsthm}
\usepackage{mathrsfs}
\usepackage{hyperref}
\usepackage{geometry}
\hypersetup{
    colorlinks=true,
    linkcolor=blue,
    citecolor=blue,
    urlcolor=blue
}

\title{\textbf{Rigorous Geometric Obstructions for Fourier Curves Generated by Prime Numbers}}

\author{
Dimitris Vartziotis$^{1,2,*}$
}

\date{}

\theoremstyle{plain}
\newtheorem{theorem}{Theorem}[section]
\newtheorem{lemma}[theorem]{Lemma}
\newtheorem{proposition}[theorem]{Proposition}
\newtheorem{definition}[theorem]{Definition}

\newcommand{\N}{\mathbb{N}}
\newcommand{\R}{\mathbb{R}}

\DeclareMathOperator{\Real}{Re}
\DeclareMathOperator{\Imag}{Im}

\DeclareMathOperator{\diam}{diam}

\begin{document}

\maketitle

\vspace{-1em}

\begin{center}
\footnotesize
$^{1}$ NIKI -- Digital Engineering, Ioannina, Greece\\
$^{2}$ TWT Science \& Innovation, Stuttgart, Germany\\
$^{*}$ Corresponding author: \texttt{dimitris.vartziotis@nikitec.gr}
\end{center}

\vspace{1em}

\begin{abstract}
We study planar curves defined by finite Fourier series of the form 
\( F_n(t)=\sum_{p\le n} v_p(n!)\, e^{i p t} \), 
where the frequencies are the prime numbers and \( v_p(n!) \) denotes the exponent of the prime \(p\) in the factorization of \(n!\). We establish several rigorous obstructions to uniform geometric regularity as \( n\to\infty \). In particular, we prove that the curve lengths grow without bound, that neither the first nor the second derivatives remain uniformly bounded, and that the diameters grow at least on the order of \( n\log\log n \). As a consequence, the covering numbers of the curves satisfy explicit quantitative lower bounds. These results provide a rigorous explanation for the complex geometric behavior observed in numerical investigations of this model.
\end{abstract}

\vspace{1em}

\noindent\textbf{Keywords:} prime numbers, Fourier series, curve length, covering numbers, geometric irregularity

\noindent\textbf{MSC (2020):} Primary 42A16; Secondary 11A25, 28A75

\section{Introduction}

Fourier series whose frequency sets encode arithmetic structure play a central role in analytic number theory, most notably in the study of exponential sums over primes (see \cite{HardyLittlewood1923,Montgomery1994}). Fourier series with prime frequencies have also been explored as a geometric way to represent arithmetic structure through planar curves (see \cite{Vartziotis2026a,Vartziotis2026b} for recent constructions and experimental investigations).

Numerical investigations reveal curves with complicated behavior at many scales. However, visual complexity alone does not constitute mathematical evidence. A natural question therefore arises:

\medskip
\noindent\emph{Can the observed geometric irregularity of Fourier curves with prime frequencies be justified rigorously, independently of discretization or heuristic interpretation?}
\medskip

Rather than attempting to compute a fractal dimension, we establish several analytic statements that force a lack of uniform geometric regularity as $n\to\infty$. The analysis uses standard tools from harmonic analysis (Parseval's identity) and elementary estimates from number theory (factorial valuations and asymptotics for primes).

For related constructions linking spectra and geometry in polygonal transformations, see \cite{VartziotisWipper2010}.

\medskip
\noindent\textbf{Series context.}
This paper forms the third part of a series.
The first paper \cite{Vartziotis2026a} introduced the Fourier model and its arithmetic motivation.
The second paper \cite{Vartziotis2026b} presented an experimental study of the geometric behavior.
The present paper provides rigorous analytic obstructions explaining why uniform smooth geometric behavior cannot persist as $n\to\infty$.

\section{Definitions and preliminaries}

For each $n\in\N$, define the finite Fourier series
\begin{equation}\label{eq:Fn}
F_n(t)=\sum_{p\le n} v_p(n!)\, e^{i p t},
\qquad t\in[-\pi,\pi],
\end{equation}
where $v_p(n!)$ is the exponent of $p$ in $n!$, given by Legendre's formula
\begin{equation}\label{eq:legendre}
v_p(n!)=\sum_{k\ge1}\left\lfloor\frac{n}{p^k}\right\rfloor.
\end{equation}
Related additive prime factor constructions were studied in \cite{VartziotisTzavellas2016}.

We associate the planar curve
\begin{equation}\label{eq:curve}
\Gamma_n(t)=\big(\Real F_n(t),\,\Imag F_n(t)\big)\subset\R^2.
\end{equation}
Since $F_n$ is a trigonometric polynomial, each $\Gamma_n$ is a smooth (hence rectifiable) curve for fixed $n$.
Our focus is the behavior of this family as $n\to\infty$.

\begin{lemma}\label{lem:vp-bounds}
For every prime $p\le n$,
\[
\frac{n}{p}-1 \le v_p(n!) \le \frac{n}{p-1}.
\]
\end{lemma}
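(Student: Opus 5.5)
Both inequalities follow directly from Legendre's formula \eqref{eq:legendre}. For the lower bound we discard every term except the first; for the upper bound we drop the floor functions and sum a geometric series. Throughout, $p\le n$ is a fixed prime, so $p\ge 2$.

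\emph{Lower bound.} Every summand $\lfloor n/p^k\rfloor$ in \eqref{eq:legendre} is nonnegative. Keeping only the $k=1$ term therefore gives $v_p(n!)\ge \lfloor n/p\rfloor$. Since $\lfloor x\rfloor > x-1$ for every real $x$, we get $\lfloor n/p\rfloor \ge n/p-1$, which is the claimed lower bound. (This bound is in fact slightly weak, since $p\le n$ already gives $\lfloor n/p\rfloor\ge 1$, but the stated form is the one needed later.)

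\emph{Upper bound.} We use $\lfloor x\rfloor\le x$ termwise. The sum in \eqref{eq:legendre} has only finitely many nonzero terms, but comparing it with the full infinite sum of nonnegative reals is legitimate. This gives
\[
v_p(n!)\le \sum_{k\ge 1}\frac{n}{p^k}=\frac{n}{p}\cdot\frac{1}{1-1/p}=\frac{n}{p-1},
\]
where the geometric series converges because $p\ge 2$.

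There is no real obstacle here. The only points to handle carefully are the justification for replacing the finite sum by the infinite one, and the remark that $p\ge2$ makes the series converge. One could also note that the upper bound is strict whenever $n/p\notin\mathbb{Z}$, but this is not needed.
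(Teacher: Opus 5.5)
Your proof is correct and matches the paper's argument: keep only the $k=1$ term of Legendre's formula and use $\lfloor n/p\rfloor\ge n/p-1$ for the lower bound, then drop the floors and sum the geometric series $\sum_{k\ge1} n/p^k = n/(p-1)$ for the upper bound. Your added justifications (nonnegativity of the terms, and convergence because $p\ge 2$) are sound and only make explicit what the paper leaves implicit.
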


\begin{proof}
The upper bound follows from \eqref{eq:legendre} using $\lfloor n/p^k\rfloor\le n/p^k$ and summing the geometric series:
\[
v_p(n!)\le \sum_{k\ge1}\frac{n}{p^k}=\frac{n}{p-1}.
\]
The lower bound follows from keeping only the first term:
$v_p(n!)\ge \lfloor n/p\rfloor\ge n/p-1$.
\end{proof}

\section{Growth of derivatives}

Differentiating termwise,
\[
F_n'(t)=\sum_{p\le n} i\,p\,v_p(n!)\, e^{i p t},
\qquad
F_n''(t)=-\sum_{p\le n} p^2 v_p(n!)\, e^{i p t}.
\]

By orthogonality of the exponentials, Parseval's identity gives
\begin{equation}\label{eq:parseval1}
\|F_n'\|_{L^2([-\pi,\pi])}^2
=
\int_{-\pi}^{\pi}|F_n'(t)|^2 dt
=2\pi\sum_{p\le n} p^2 v_p(n!)^2,
\end{equation}
\begin{equation}\label{eq:parseval2}
\|F_n''\|_{L^2([-\pi,\pi])}^2
=
\int_{-\pi}^{\pi}|F_n''(t)|^2 dt
=2\pi\sum_{p\le n} p^4 v_p(n!)^2.
\end{equation}

\begin{proposition}\label{prop:l2-diverge}
As $n\to\infty$, both $\|F_n'\|_{L^2([-\pi,\pi])}$ and $\|F_n''\|_{L^2([-\pi,\pi])}$ diverge.
Moreover, $\|F_n''\|_{L^2}$ diverges faster than $\|F_n'\|_{L^2}$.
\end{proposition}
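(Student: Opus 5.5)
We bound the two Parseval sums \eqref{eq:parseval1} and \eqref{eq:parseval2} from below using Lemma~\ref{lem:vp-bounds}, and compare their ratio. Set
\[
A_n=\sum_{p\le n}p^2v_p(n!)^2,\qquad B_n=\sum_{p\le n}p^4v_p(n!)^2 ,
\]
so that $\|F_n'\|_{L^2}^2=2\pi A_n$ and $\|F_n''\|_{L^2}^2=2\pi B_n$.

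\textbf{Divergence of $\|F_n'\|_{L^2}$.} For a prime $p\le n/2$ we have $n/p-1\ge n/(2p)$. The lower bound in Lemma~\ref{lem:vp-bounds} then gives $p\,v_p(n!)\ge n/2$. Hence
\[
A_n\ \ge\ \sum_{p\le n/2}\frac{n^2}{4}=\frac{n^2}{4}\,\pi(n/2).
\]
By Chebyshev (or simply because there are infinitely many primes), this is at least of order $n^3/\log n$, so $A_n\to\infty$. The upper bound $p\,v_p(n!)\le pn/(p-1)\le 2n$ gives $A_n\le 4n^2\pi(n)$. Thus $A_n\asymp n^2\pi(n)$, which is of order $n^3/\log n$.

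\textbf{Divergence of $\|F_n''\|_{L^2}$.} Every prime $p\le n$ has $v_p(n!)\ge1$, since $p$ divides $n!$. So $B_n\ge\sum_{p\le n}p^4$, which already diverges. For the comparison we need more precise bounds:
\begin{itemize}
\item Primes in $(n/2,n]$ have $v_p(n!)=1$. They contribute $\sum_{n/2<p\le n}p^4\ge (n/2)^4\bigl(\pi(n)-\pi(n/2)\bigr)$.
\item By the prime number theorem, or by Chebyshev-type bounds such as Bertrand-strength estimates $\pi(n)-\pi(n/2)\gg n/\log n$, this contribution is of order $n^5/\log n$.
\end{itemize}
Alternatively, staying with the primes $p\le n/2$, we have $p^4v_p^2\ge p^2n^2/4$. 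This gives $B_n\ge (n^2/4)\sum_{p\le n/2}p^2$, which is of order $n^5/\log n$ as well.

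\textbf{Comparing the two norms.} The ratio satisfies
\[
\frac{B_n}{A_n}\gtrsim\frac{n^5/\log n}{n^3/\log n}=n^2\to\infty ,
\]
so $\|F_n''\|_{L^2}/\|F_n'\|_{L^2}\gtrsim n$ and the second norm diverges faster. A cruder argument also works and avoids prime-counting asymptotics. The smallest frequency satisfies $p\ge 2$, so $B_n\ge 4A_n$ termwise. That gives $\|F_n''\|\ge2\|F_n'\|$, but only a constant factor; for genuine faster divergence we need the ratio to tend to infinity.

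\textbf{A cleaner route to the ratio.} The sharper version splits $A_n$ at $p=\sqrt n$. From the upper bound in Lemma~\ref{lem:vp-bounds}, the small primes contribute at most $4n^2\pi(\sqrt n)$. Every large prime term in $B_n$ is at least $n$ times the corresponding term in $A_n$. Hence
\[
B_n\ \ge\ n\bigl(A_n-4n^2\pi(\sqrt n)\bigr),
\]
and since $A_n$ dominates $n^2\pi(\sqrt n)$, this yields $B_n/A_n\to\infty$.

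\textbf{Main obstacle.} The only nontrivial input is a lower bound on prime counts: $\pi(n/2)\gg n/\log n$, and $\pi(n/2)/\pi(\sqrt n)\to\infty$ for the splitting argument. I would cite Chebyshev's estimates for both rather than the full prime number theorem. The rest is routine bookkeeping with Lemma~\ref{lem:vp-bounds}.
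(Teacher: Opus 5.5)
Your proof is correct, but it takes a somewhat different route from the paper's.

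\textbf{The paper's route.} It restricts to the window $n/3<p\le n/2$, where $v_p(n!)=2$ exactly. This gives $A_n\ge 4(n/3)^2\bigl(\pi(n/2)-\pi(n/3)\bigr)$ and $B_n\ge 4(n/3)^4\bigl(\pi(n/2)-\pi(n/3)\bigr)$. It then uses the prime number theorem for $\pi(n/2)-\pi(n/3)\asymp n/\log n$, and concludes from these two lower bounds alone that $B_n$ has higher-order growth.

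\textbf{Your route.} You apply $v_p(n!)\ge n/p-1\ge n/(2p)$ to all $p\le n/2$, so you only need $\pi(n/2)\gg n/\log n$, which Chebyshev's bounds give. More importantly, you pair this with the upper bound $p\,v_p(n!)\le 2n$, which gives $A_n\le 4n^2\pi(n)$.

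That upper bound is what the ``moreover'' clause actually requires. Lower bounds of order $n^3/\log n$ and $n^5/\log n$ do not by themselves show $B_n/A_n\to\infty$. The paper leaves this step implicit; the bound $p\,v_p(n!)\le 2n$ appears only later, in the proof of Theorem~\ref{thm:length}.

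Your two-sided estimate yields $\|F_n''\|_{L^2}/\|F_n'\|_{L^2}\gtrsim n$. Your split at $\sqrt n$ gives $B_n\ge nA_n/2$ for large $n$. That version needs essentially no prime counting beyond $\pi(n/2)/\sqrt n\to\infty$.

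So the paper's argument is shorter and uses exact valuations, while yours actually proves the comparison, with an explicit rate and weaker number-theoretic input.

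One minor wording point: $\pi(n)-\pi(n/2)\gg n/\log n$ is far stronger than Bertrand's postulate. It needs Chebyshev's bounds with good constants, or the prime number theorem. Your alternative lower bound via $(n^2/4)\sum_{p\le n/2}p^2$ and your splitting argument do not depend on it.
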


\begin{proof}
For primes $p\in(n/3,n/2]$ we have $\lfloor n/p\rfloor=2$ and $p^2>n$ (for $n$ large), hence $v_p(n!)=\lfloor n/p\rfloor=2$.
Therefore
\[
\sum_{p\le n} p^2 v_p(n!)^2
\;\ge\;
\sum_{n/3<p\le n/2} p^2\cdot 4
\;\ge\;
4\left(\frac{n}{3}\right)^2\big(\pi(n/2)-\pi(n/3)\big),
\]
and similarly,
\[
\sum_{p\le n} p^4 v_p(n!)^2
\;\ge\;
4\left(\frac{n}{3}\right)^4\big(\pi(n/2)-\pi(n/3)\big),
\]
where $\pi(x)$ denotes the number of primes $\le x$.
By the prime number theorem (see \cite{Montgomery1994}), $\pi(n/2)-\pi(n/3)\to\infty$ and is comparable to $n/\log n$, so both sums diverge and the second has higher order growth.
Using \eqref{eq:parseval1}--\eqref{eq:parseval2} yields the claim.
\end{proof}

\section{Divergence of curve length (no uniform length bound)}\label{sec:length}

\begin{definition}
The length of $\Gamma_n$ on $[-\pi,\pi]$ is
\[
L(\Gamma_n)=\int_{-\pi}^{\pi}|\Gamma_n'(t)|\,dt=\int_{-\pi}^{\pi}|F_n'(t)|\,dt.
\]
\end{definition}

\begin{lemma}\label{lem:l1-l2-linf}
For any measurable $2\pi$-periodic function $f$,
\[
\|f\|_{L^1([-\pi,\pi])}\ge \frac{\|f\|_{L^2([-\pi,\pi])}^2}{\|f\|_{L^\infty([-\pi,\pi])}}.
\]
\end{lemma}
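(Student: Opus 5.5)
The inequality is the elementary pointwise bound $|f|^2\le \|f\|_{L^\infty}\,|f|$, integrated over $[-\pi,\pi]$. I would begin by disposing of the degenerate cases, so that the quotient on the right-hand side makes sense. If $\|f\|_{L^\infty}=0$, then $f=0$ almost everywhere and both sides vanish under the natural convention. If $\|f\|_{L^\infty}=\infty$, the right-hand side is $0$ when $\|f\|_{L^2}<\infty$, and the inequality is trivial. If both $\|f\|_{L^2}$ and $\|f\|_{L^\infty}$ are infinite, the quotient is of the form $\infty/\infty$ and must be excluded or interpreted conventionally. In the paper's application $f=F_n'$ is a trigonometric polynomial, so all norms are finite and positive, and I would state that the lemma is really intended for $0<\|f\|_{L^\infty}<\infty$.

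In the main case $0<M:=\|f\|_{L^\infty([-\pi,\pi])}<\infty$, the definition of the essential supremum gives $|f(t)|\le M$ for almost every $t\in[-\pi,\pi]$. Multiplying by $|f(t)|\ge 0$ yields
\[
|f(t)|^2\le M\,|f(t)| \qquad \text{for a.e. } t .
\]
Integrating over $[-\pi,\pi]$ and using monotonicity of the Lebesgue integral for nonnegative measurable functions gives $\|f\|_{L^2}^2\le M\,\|f\|_{L^1}$. This holds even if $\|f\|_{L^1}=\infty$, in which case the inequality is trivial. Dividing by $M>0$ gives the claim.

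Periodicity plays no role beyond fixing the interval $[-\pi,\pi]$. There is no genuine obstacle here: the only point needing care is the bookkeeping of the degenerate cases $M\in\{0,\infty\}$, which I would handle by the conventions above. Equivalently, the result is H\"older's inequality $\int |f|\cdot|f|\le \|f\|_{L^1}\|f\|_{L^\infty}$.
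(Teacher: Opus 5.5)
Your proof is correct and is the same argument as the paper's: integrate the pointwise bound $|f|^2\le \|f\|_{L^\infty}|f|$ (equivalently, H\"older with exponents $1$ and $\infty$) over $[-\pi,\pi]$ and divide. Your handling of the degenerate cases $\|f\|_{L^\infty}\in\{0,\infty\}$ is extra care the paper omits and is fine. Note only that on $[-\pi,\pi]$ a finite $\|f\|_{L^\infty}$ already forces $\|f\|_{L^1}$ and $\|f\|_{L^2}$ to be finite, so your remark about $\|f\|_{L^1}=\infty$ covers an empty case.
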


\begin{proof}
We have $\|f\|_2^2=\int |f|^2 \le \|f\|_\infty\int |f|=\|f\|_\infty\|f\|_1$.
\end{proof}

\begin{theorem}\label{thm:length}
The lengths $L(\Gamma_n)$ are not uniformly bounded as $n\to\infty$. In fact, $L(\Gamma_n)\to\infty$.
\end{theorem}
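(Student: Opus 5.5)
The plan is to apply Lemma~\ref{lem:l1-l2-linf} to $f=F_n'$. Since $L(\Gamma_n)=\|F_n'\|_{L^1}$, it suffices to show that $\|F_n'\|_{L^2}^2/\|F_n'\|_{L^\infty}\to\infty$. The numerator is controlled from below by Parseval \eqref{eq:parseval1} and the estimate in the proof of Proposition~\ref{prop:l2-diverge}. The denominator is controlled from above by the triangle inequality together with Lemma~\ref{lem:vp-bounds}.

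\emph{Numerator.} As in the proof of Proposition~\ref{prop:l2-diverge}, for $n$ large we have
\[
\|F_n'\|_{L^2}^2 = 2\pi\sum_{p\le n}p^2v_p(n!)^2 \ge 8\pi\Big(\frac n3\Big)^2\big(\pi(n/2)-\pi(n/3)\big).
\]
By the prime number theorem (Chebyshev-type bounds would also suffice), $\pi(n/2)-\pi(n/3)\ge c\,n/\log n$ for some absolute constant $c>0$ and all large $n$. Hence $\|F_n'\|_{L^2}^2\ge c_1 n^3/\log n$.

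\emph{Denominator.} For every $t$ we have $|F_n'(t)|\le\sum_{p\le n}p\,v_p(n!)$. By Lemma~\ref{lem:vp-bounds}, each term satisfies $p\,v_p(n!)\le \frac{p}{p-1}\,n\le 2n$. Therefore
\[
\|F_n'\|_{L^\infty}\le 2n\,\pi(n)\le C\,\frac{n^2}{\log n},
\]
where the last step again uses the prime number theorem or Chebyshev's upper bound.

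\emph{Conclusion.} Combining these with Lemma~\ref{lem:l1-l2-linf} gives $L(\Gamma_n)\ge (c_1/C)\,n$, so $L(\Gamma_n)\to\infty$. This is in fact a linear lower bound.

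The only point needing care is that the $\log n$ factors cancel. The lower bound must therefore count a positive proportion of all primes up to $n$, which the interval $(n/3,n/2]$ does. A crude denominator bound of $\pi(n)\le n$ would lose the logarithm and yield only $L(\Gamma_n)\gtrsim n/\log n$. That weaker bound still diverges, so the theorem is robust even if one uses only weak prime-counting estimates.
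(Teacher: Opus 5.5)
Your proposal is correct and follows the paper's proof essentially step for step: Lemma~\ref{lem:l1-l2-linf} with $f=F_n'$, the Parseval lower bound from the primes in $(n/3,n/2]$, the triangle-inequality bound $\|F_n'\|_{L^\infty}\le 2n\,\pi(n)$ via Lemma~\ref{lem:vp-bounds}, and the prime number theorem to conclude $L(\Gamma_n)\gtrsim n$. One minor remark on your asides: the numerator's count $\pi(n/2)-\pi(n/3)\gg n/\log n$ needs Chebyshev-type bounds with reasonably sharp constants (upper-to-lower ratio below $3/2$, which Chebyshev's original constants satisfy), so your point about robustness to ``weak'' prime-counting estimates applies only to the denominator.
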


\begin{proof}
By Lemma \ref{lem:l1-l2-linf} with $f=F_n'$,
\[
L(\Gamma_n)=\|F_n'\|_{L^1}\ge \frac{\|F_n'\|_{L^2}^2}{\|F_n'\|_{L^\infty}}.
\]
Using the triangle inequality,
\[
\|F_n'\|_{L^\infty}
\le \sum_{p\le n} p\,v_p(n!).
\]
By Lemma \ref{lem:vp-bounds}, $p\,v_p(n!)\le p\cdot \frac{n}{p-1}\le 2n$ for all primes $p\ge 2$, hence
\[
\|F_n'\|_{L^\infty}\le 2n\,\pi(n).
\]
On the other hand, Proposition \ref{prop:l2-diverge} gives $\|F_n'\|_{L^2}^2\to\infty$.
More quantitatively, the proof of Proposition \ref{prop:l2-diverge} yields
\[
\|F_n'\|_{L^2}^2
=2\pi\sum_{p\le n} p^2 v_p(n!)^2
\ge c_1\,n^2\big(\pi(n/2)-\pi(n/3)\big)
\]
for some constant $c_1>0$ and all large $n$.
Combining these bounds,
\[
L(\Gamma_n)\ge
\frac{c_1\,n^2(\pi(n/2)-\pi(n/3))}{2n\,\pi(n)}
=
c_2\,n\,\frac{\pi(n/2)-\pi(n/3)}{\pi(n)}.
\]
By the prime number theorem, the ratio $\frac{\pi(n/2)-\pi(n/3)}{\pi(n)}$ stays bounded away from $0$ for all large $n$ (see \cite{Montgomery1994}), so $L(\Gamma_n)\gtrsim n\to\infty$.
\end{proof}

\section{Failure of uniform $C^1$ bounds}\label{sec:C1}

\begin{theorem}\label{thm:C1}
There is no subsequence of $\{\Gamma_n\}$ whose first derivative is uniformly bounded on $[-\pi,\pi]$.
Equivalently, $\|F_n'\|_{L^\infty([-\pi,\pi])}\to\infty$.
\end{theorem}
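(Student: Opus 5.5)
The plan is to bound $\|F_n'\|_{L^\infty}$ from below by an $L^2$ average and then invoke Proposition \ref{prop:l2-diverge}. For any $2\pi$-periodic bounded $f$,
\[
\|f\|_{L^2([-\pi,\pi])}^2=\int_{-\pi}^{\pi}|f|^2\,dt\le 2\pi\,\|f\|_{L^\infty([-\pi,\pi])}^2 .
\]
With $f=F_n'$ and \eqref{eq:parseval1} this gives
\[
\|F_n'\|_{L^\infty}^2\ \ge\ \frac{1}{2\pi}\|F_n'\|_{L^2}^2=\sum_{p\le n}p^2 v_p(n!)^2 .
\]
Since $F_n'$ is continuous, the essential supremum coincides with $\max_t|F_n'(t)|$, so this really bounds the pointwise size of $\Gamma_n'$.

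Next I would make the right-hand side quantitative, as in the proof of Proposition \ref{prop:l2-diverge}. Restrict the sum to primes $p\in(n/3,n/2]$; for these $v_p(n!)=2$ once $n\ge 10$, since then $p^2>n$. This gives
\[
\|F_n'\|_{L^\infty}^2\ \ge\ 4\,(n/3)^2\big(\pi(n/2)-\pi(n/3)\big).
\]
By the prime number theorem, $\pi(n/2)-\pi(n/3)\sim \tfrac{n}{6\log n}$, so $\|F_n'\|_{L^\infty}\gtrsim n^{3/2}/\sqrt{\log n}\to\infty$.

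There is an even cheaper route that avoids the prime number theorem: evaluate at $t=0$. All coefficients $p\,v_p(n!)$ are nonnegative, so
\[
|F_n'(0)|=\sum_{p\le n}p\,v_p(n!)\ \ge\ 2\,v_2(n!)\ \ge\ 2(n/2-1)
\]
by Lemma \ref{lem:vp-bounds}, which already tends to infinity. I would present this as the main argument, since it is elementary, and mention the $L^2$ bound as the sharper rate.

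Finally, the subsequence formulation is equivalent to the divergence. If some subsequence $\Gamma_{n_k}$ had $\sup_k\|F_{n_k}'\|_\infty<\infty$, this would contradict $\|F_n'\|_\infty\to\infty$ along the full sequence. Conversely, the divergence statement says exactly that every subsequence is unbounded. No step here is a real obstacle. The only point needing care is the identity $v_p(n!)=2$ for $p\in(n/3,n/2]$, which is only used in the sharper quantitative version.
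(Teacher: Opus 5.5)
Your proposal is correct. Your $L^2$ argument, $\|F_n'\|_{L^\infty}\ge(2\pi)^{-1/2}\|F_n'\|_{L^2}$ combined with Proposition~\ref{prop:l2-diverge}, is exactly the paper's proof; your explicit rate $n^{3/2}/\sqrt{\log n}$ follows directly from the estimate in that proposition's proof.

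The argument you choose as your main one, evaluation at $t=0$, is a genuinely different and more elementary route. It uses only two facts: all coefficients $p\,v_p(n!)$ are nonnegative, and Legendre's formula gives the bound for $p=2$. It needs no prime-counting input, whereas the paper's proof of Proposition~\ref{prop:l2-diverge} invokes the prime number theorem.

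Your observation actually gives more than you use. By the triangle inequality, $|F_n'(t)|\le\sum_{p\le n}p\,v_p(n!)=|F_n'(0)|$ for every $t$. So the supremum is attained at $t=0$ and
\[
\|F_n'\|_{L^\infty}=\sum_{p\le n}p\,v_p(n!)
\]
exactly. You can now keep all the terms instead of only $p=2$. For $p\le n/2$ you have $p\,v_p(n!)\ge n-p\ge n/2$, and Lemma~\ref{lem:vp-bounds} gives $p\,v_p(n!)\le 2n$. Together these yield $\tfrac n2\,\pi(n/2)\le\|F_n'\|_{L^\infty}\le 2n\,\pi(n)$, which is of order $n^2/\log n$.

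So the $L^2$ bound is ``sharper'' only than your crude $n-2$, not than what evaluation at $0$ really delivers. Because $p\,v_p(n!)\asymp n$ for all $p\le n$, the $L^2$ route can never give more than about $n\sqrt{\pi(n)}\asymp n^{3/2}/\sqrt{\log n}$. It loses a factor of roughly $\sqrt{n/\log n}$ because averaging over $t$ cannot detect the constructive interference at $t=0$.

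The same observation applies verbatim to $F_n''$ in Theorem~\ref{thm:C11}. It also shows that the upper bound $\|F_n'\|_{L^\infty}\le 2n\,\pi(n)$ used in Theorem~\ref{thm:length} is sharp up to a constant.
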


\begin{proof}
Since $\|f\|_{L^2}\le \sqrt{2\pi}\,\|f\|_{L^\infty}$, we have
\[
\|F_n'\|_{L^\infty}\ge \frac{1}{\sqrt{2\pi}}\|F_n'\|_{L^2}.
\]
By Proposition \ref{prop:l2-diverge}, $\|F_n'\|_{L^2}\to\infty$, hence $\|F_n'\|_{L^\infty}\to\infty$.
\end{proof}

\section{Failure of uniform $C^{1,1}$ bounds}\label{sec:C11}

\begin{theorem}\label{thm:C11}
There is no subsequence of $\{\Gamma_n\}$ with uniformly bounded second derivative on $[-\pi,\pi]$.
Equivalently, $\|F_n''\|_{L^\infty([-\pi,\pi])}\to\infty$.
\end{theorem}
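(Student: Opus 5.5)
We argue exactly as in the proof of Theorem \ref{thm:C1}, replacing the first derivative by the second. The key inequality is the comparison between the $L^2$ and $L^\infty$ norms on an interval of length $2\pi$:
\[
\|F_n''\|_{L^2([-\pi,\pi])}^2=\int_{-\pi}^{\pi}|F_n''(t)|^2\,dt\le 2\pi\,\|F_n''\|_{L^\infty([-\pi,\pi])}^2 .
\]
Hence $\|F_n''\|_{L^\infty}\ge (2\pi)^{-1/2}\|F_n''\|_{L^2}$. By Proposition \ref{prop:l2-diverge}, $\|F_n''\|_{L^2}\to\infty$, so $\|F_n''\|_{L^\infty}\to\infty$.

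To make this quantitative, we use the lower bound from the proof of Proposition \ref{prop:l2-diverge}. It comes from the primes $p\in(n/3,n/2]$, for which $v_p(n!)=2$. Together with \eqref{eq:parseval2} it gives
\[
\|F_n''\|_{L^2}^2\ge 8\pi\left(\tfrac{n}{3}\right)^4\big(\pi(n/2)-\pi(n/3)\big)\gtrsim \frac{n^5}{\log n}.
\]
Therefore $\|F_n''\|_{L^\infty}\gtrsim n^{5/2}/\sqrt{\log n}$.

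Finally, we address the subsequence formulation. Since the full sequence $\|F_n''\|_{L^\infty}$ tends to infinity, every subsequence also tends to infinity. So no subsequence can have a uniformly bounded second derivative. Conversely, if no subsequence were bounded, the sequence would diverge. This gives the stated equivalence.

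There is no real obstacle here. The only step needing care is one already handled in Proposition \ref{prop:l2-diverge}: verifying that $p^2>n$ for $p>n/3$ once $n>9$, so that Legendre's formula reduces to the single term $\lfloor n/p\rfloor=2$.
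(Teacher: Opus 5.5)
Your proof is correct and follows the paper's argument: the bound $\|F_n''\|_{L^\infty}\ge(2\pi)^{-1/2}\|F_n''\|_{L^2}$ combined with the divergence from Proposition~\ref{prop:l2-diverge}. The quantitative rate $n^{5/2}/\sqrt{\log n}$ and your check of the subsequence equivalence are both correct, and they are details the paper leaves implicit.
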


\begin{proof}
As in Theorem \ref{thm:C1},
\[
\|F_n''\|_{L^\infty}\ge \frac{1}{\sqrt{2\pi}}\|F_n''\|_{L^2}.
\]
Proposition \ref{prop:l2-diverge} implies $\|F_n''\|_{L^2}\to\infty$, hence $\|F_n''\|_{L^\infty}\to\infty$.
\end{proof}

\section{Diameter growth and covering lower bounds}\label{sec:cover}

Let $N_n(\varepsilon)$ be the minimal number of closed axis-aligned squares of side length $\varepsilon$ needed to cover the set $\Gamma_n([-\pi,\pi])$.

\begin{lemma}\label{lem:connected-cover}
Let $S\subset\R^2$ be connected and covered by $m$ sets, each of diameter at most $\delta$.
Then $\diam(S)\le m\,\delta$.
\end{lemma}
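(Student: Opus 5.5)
Let $A_1,\dots,A_m$ be the covering sets, each of diameter at most $\delta$; discard any that miss $S$, so each meets $S$ (if $S$ is empty there is nothing to prove). The plan is to use connectedness of $S$ to show that any two covering sets are linked by a chain of pairwise-touching covering sets, and then bound distances along such a chain.

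\emph{Step 1: a connectedness graph.} Form a graph $G$ on $\{1,\dots,m\}$, joining $i$ and $j$ when $\overline{A_i}\cap\overline{A_j}\ne\emptyset$. We claim $G$ is connected. Suppose it were not, and let $I$ be the vertex set of one component and $J$ the complement. Put $U=S\cap\bigcup_{i\in I}\overline{A_i}$ and $V=S\cap\bigcup_{j\in J}\overline{A_j}$. Each is closed in $S$, since it is a finite union of closed sets intersected with $S$. Both are nonempty, because each $A_i$ meets $S$. They cover $S$, and they are disjoint: no closure indexed by $I$ meets one indexed by $J$. This splits $S$ into two disjoint nonempty relatively closed sets, contradicting connectedness.

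The obstacle here is to use closures. The sets $A_i$ themselves could be disjoint yet abut, for example half-open squares, and then the graph on the $A_i$ alone could be disconnected even though $S$ is connected. Passing to closures costs nothing, since $\diam\overline{A_i}=\diam A_i\le\delta$. In the intended application the covering sets are closed squares anyway.

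\emph{Step 2: chain estimate.} Take $x,y\in S$ with $x\in A_a$ and $y\in A_b$. Because $G$ is connected, there is a simple path $a=i_0,i_1,\dots,i_k=b$ in $G$ with $k\le m-1$. Choose points $z_r\in\overline{A_{i_{r-1}}}\cap\overline{A_{i_r}}$ for $r=1,\dots,k$, and set $z_0=x$ and $z_{k+1}=y$. Consecutive points $z_r,z_{r+1}$ both lie in $\overline{A_{i_r}}$, so $|z_r-z_{r+1}|\le\delta$. The triangle inequality then gives
\[
|x-y|\le\sum_{r=0}^{k}|z_r-z_{r+1}|\le (k+1)\delta\le m\delta .
\]
Taking the supremum over $x,y\in S$ yields $\diam(S)\le m\,\delta$.
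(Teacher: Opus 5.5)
Your proof is correct and follows the same route as the paper: connectedness of $S$ links the covering sets containing $x$ and $y$ by a chain of pairwise-meeting sets, and the triangle inequality along that chain gives the bound. Your version is more careful than the paper's in two places. Passing to closures is exactly what guarantees a chain with nonempty intersections; the paper's inference from ``connected union'' tacitly needs closed sets, which its closed squares are. Taking a supremum also avoids the paper's assumption that $\diam(S)$ is attained.
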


\begin{proof}
Let $x,y\in S$ with $|x-y|=\diam(S)$.
Since $S$ is connected, the subcollection of covering sets that intersect $S$ has a connected union (otherwise $S$ would be separated into two disjoint open sets in the relative topology).
Thus there exists a chain of covering sets $A_1,\dots,A_m$ such that $x\in A_1$, $y\in A_m$, and $A_j\cap A_{j+1}\neq\emptyset$ for each $j$.
Pick points $z_j\in A_j\cap A_{j+1}$.
Then
\[
|x-y|
\le |x-z_1|+\sum_{j=1}^{m-2}|z_j-z_{j+1}|+|z_{m-1}-y|
\le m\,\delta,
\]
because each term connects two points lying in one set of diameter $\le\delta$.
\end{proof}

\begin{theorem}\label{thm:cover}
There exists $c>0$ such that for all sufficiently large $n$ and all $0<\varepsilon\le 1$,
\[
N_n(\varepsilon)\ge c\,\frac{n\log\log n}{\varepsilon}.
\]
\end{theorem}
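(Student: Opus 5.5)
The plan is to reduce the covering bound to a diameter bound via Lemma~\ref{lem:connected-cover}, and then to obtain the diameter bound by evaluating $F_n$ at two explicit parameter values where the prime frequencies produce constructive versus alternating signs.

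\emph{Step 1 (reduction to diameter).} The set $S=\Gamma_n([-\pi,\pi])$ is the continuous image of an interval, hence connected. Suppose it is covered by $N_n(\varepsilon)$ closed axis-aligned squares of side $\varepsilon$. Each square has diameter $\sqrt2\,\varepsilon$. Lemma~\ref{lem:connected-cover} then gives
\[
\diam(S)\le \sqrt2\,\varepsilon\,N_n(\varepsilon),
\qquad\text{i.e.}\qquad
N_n(\varepsilon)\ge \frac{\diam(\Gamma_n)}{\sqrt2\,\varepsilon}.
\]
It therefore suffices to show $\diam(\Gamma_n)\ge c'\,n\log\log n$ for all large $n$. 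The restriction $\varepsilon\le1$ plays no essential role here.

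\emph{Step 2 (two test points).} I will use $\diam(\Gamma_n)\ge |F_n(0)-F_n(\pi)|$. We have $F_n(0)=\sum_{p\le n}v_p(n!)$. Since every odd prime gives $e^{ip\pi}=-1$, while $p=2$ gives $+1$, we get $F_n(\pi)=v_2(n!)-\sum_{2<p\le n}v_p(n!)$. Hence
\[
|F_n(0)-F_n(\pi)|=2\sum_{2<p\le n}v_p(n!)
\ge 2\sum_{p\le n}\Big(\frac np-1\Big)-2v_2(n!)
\ge 2n\sum_{p\le n}\frac1p-2\pi(n)-2n,
\]
where I use Lemma~\ref{lem:vp-bounds} for both the lower bound on each $v_p(n!)$ and the bound $v_2(n!)\le n$.

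\emph{Step 3 (Mertens).} By Mertens' second theorem, $\sum_{p\le n}1/p=\log\log n+O(1)$. Also $\pi(n)\le n$. Together these give $|F_n(0)-F_n(\pi)|\ge 2n\log\log n-Cn$, which is at least $n\log\log n$ once $\log\log n\ge C$. Combining with Step 1 yields the theorem with $c=1/\sqrt2$.

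The only genuinely number-theoretic input is Mertens' estimate, which I would cite from \cite{Montgomery1994}; the rest is bookkeeping. I expect the main point of care to be the chain argument underlying Lemma~\ref{lem:connected-cover}: one must check that the covering squares meeting $S$ can be linked in a chain from $x$ to $y$ using at most $N_n(\varepsilon)$ squares. This follows from connectedness of $S$ together with finiteness and closedness of the cover, and I would simply invoke the lemma as stated.
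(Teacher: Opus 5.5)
Your proposal is correct and follows the paper's proof essentially step for step. It uses the same reduction to the diameter via Lemma~\ref{lem:connected-cover}, the same test points $t=0$ and $t=\pi$, Lemma~\ref{lem:vp-bounds}, and Mertens' theorem. Your small variations are harmless: you sum over all primes and subtract $v_2(n!)\le n$ instead of summing only over odd primes, and you use the trivial bound $\pi(n)\le n$ instead of the prime number theorem. They even give the explicit constant $c=1/\sqrt2$.
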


\begin{proof}
Each square of side $\varepsilon$ has diameter $\sqrt{2}\,\varepsilon$.
By Lemma \ref{lem:connected-cover} (applied to $S=\Gamma_n([-\pi,\pi])$) we obtain
\begin{equation}\label{eq:N-lower-diam}
N_n(\varepsilon)\ge \frac{\diam(\Gamma_n)}{\sqrt{2}\,\varepsilon}.
\end{equation}

We now lower bound $\diam(\Gamma_n)$ by evaluating the curve at two parameter values.
Since $F_n(0)=\sum_{p\le n}v_p(n!)$ and $e^{ip\pi}=1$ for $p=2$ and $e^{ip\pi}=-1$ for odd primes,
\[
F_n(\pi)=v_2(n!)-\sum_{\substack{p\le n\\ p\ \mathrm{odd}}} v_p(n!).
\]
Therefore
\[
|F_n(0)-F_n(\pi)|
=
2\sum_{\substack{p\le n\\ p\ \mathrm{odd}}} v_p(n!),
\]
and since both points lie on the curve, $\diam(\Gamma_n)\ge |F_n(0)-F_n(\pi)|$.
Using Lemma \ref{lem:vp-bounds},
\[
\sum_{\substack{p\le n\\ p\ \mathrm{odd}}} v_p(n!)
\ge
\sum_{\substack{p\le n\\ p\ \mathrm{odd}}} \left(\frac{n}{p}-1\right)
=
n\sum_{\substack{p\le n\\ p\ \mathrm{odd}}}\frac{1}{p}
-\big(\pi(n)-1\big).
\]
By Mertens' theorem (see, e.g., \cite[Ch.~1]{Montgomery1994}),
\[
\sum_{p\le n}\frac{1}{p}=\log\log n + O(1),
\]
so $n\sum_{p\le n,\,p\ \mathrm{odd}}1/p \gtrsim n\log\log n$, while $\pi(n)=O(n/\log n)$ by the prime number theorem.
Hence for all large $n$,
\[
\diam(\Gamma_n)\ge c_1\,n\log\log n
\]
for some $c_1>0$.
Combining with \eqref{eq:N-lower-diam} gives the stated bound.
\end{proof}

\section{Discussion and conclusion}

The results above establish several rigorous obstructions to uniform geometric regularity in the family $\{\Gamma_n\}$. 
We have shown that the curve lengths diverge as $n\to\infty$, so there is no uniform bound on length within the family. 
Moreover, the first derivatives are not uniformly bounded, and therefore no subsequence can admit a uniform $C^1$ bound on $[-\pi,\pi]$. 
The second derivatives are likewise unbounded, excluding any uniform $C^{1,1}$ control. 
In addition, the diameter grows at least on the order of $n\log\log n$, which in turn forces explicit lower bounds on the covering numbers.

Together, these results provide a rigorous explanation for why the curves studied experimentally in \cite{Vartziotis2026a,Vartziotis2026b} cannot stabilize into uniformly smooth geometric objects as $n$ increases.

No claim is made regarding the existence of a limiting fractal dimension. For background on fractal dimension of curves, see \cite{Falconer2014,Tricot1995}.

\section*{Acknowledgments}

The author thanks NIKI Digital Engineering and TWT GmbH Science \& Innovation for support. He also thanks S. Katsioli, and V. Maroulis for helpful discussions.

\end{document}